 \newtheorem{theorem}{Theorem}[section]
      \newtheorem{lemma}[theorem]{Lemma}
      \newtheorem{definition}[theorem]{Definition}
      \newtheorem{remark}[theorem]{Remark}
\newcommand{\be}{\begin{eqnarray*}}
\newcommand{\ee}{\end{eqnarray*}}
\newcommand{\beq}{\begin{equation}}
\newcommand{\eeq}{\end{equation}}
\newcommand{\beqn}{\begin{equation*}}
\newcommand{\eeqn}{\end{equation*}}
\newcommand{\bsp}{\begin{split}}
\newcommand{\esp}{\end{split}}
\DeclareMathOperator\supp{supp}
\begin{document}

\title{A note on weak compactness criteria in $L_1$}

\author{Y. Nessipbayev}
\address{
School of Mathematics and Statistics, University of New South Wales, Kensington, NSW, 2052, Australia;
Institute of Mathematics and Mathematical Modeling, 050010 Almaty, Kazakhstan;}
\email{y.nessipbayev@unsw.edu.au}

%\author{F. Sukochev}
%\address{School of Mathematics and Statistics, University of New South Wales, Kensington, NSW, 2052, Australia}
%\email{f.sukochev@unsw.edu.au}

\author{K. Tulenov}
\address{
Al-Farabi Kazakh National University, 050040 Almaty, Kazakhstan;
Institute of Mathematics and Mathematical Modeling, 050010 Almaty, Kazakhstan; Department of Mathematics: Analysis, Logic and Discrete Mathematics, Ghent University, Ghent,
Belgium.}
\email{tulenov@math.kz}

\subjclass[2010]{46B50, 46E30, 46B25.}
\keywords{weak compactness, equi-absolutely continuous norm, Orlicz spaces.}
\date{}
\begin{abstract}
We provide a direct proof for the equivalence of K.M. Chong's and De la Vall\'{e}e Poussin's criteria of weak compactness for a subset $K$ of $L_1(0,1)$. Furthermore, we prove the equivalence in $L_1(0, \infty)$ under some additional condition.
\end{abstract}

\maketitle

\section{Introduction}
It is well known that for a subset $K\subset L_1(\Omega, \Sigma, \nu)$, where $(\Omega, \Sigma, \nu)$ is a finite measure space, the following conditions are equivalent:
%\begin{enumerate}[{\rm (i)}]
%         \item $\varphi(0)=0;$
%          \item $\varphi(t)$ is positive and increasing for $t>0;$
%         \item $\frac{\varphi(t)}{t}$ is decreasing for $t>0.$
%\end{enumerate}
\begin{enumerate}[\rm(i)]
  \item  $K$ is a relatively weakly compact set;
  \item\label{ii}  $K$ is bounded and uniformly integrable (Dunford-Pettis criterion, see \cite[Theorem 15, p.76]{Diestel}, \cite{D-P}, \cite[Theorem 23, p.20]{Meyer});
  \item\label{iii} there exists an $N$-function $F$ (see Definitions \ref{N} and \ref{O}) such that
$$\sup \left\{\int F(f)d\nu: \ f \in K \right\}< \infty$$
(De la Vall\'{e}e Poussin's criterion, see \cite[Theorem 22, p.19-20]{Meyer}, see also \cite[Theorem 2, p.3]{RaoRen});
  \item $K$ is contained in the orbit (see formula \ref{orbit}) of some positive integrable function (in the sense of the Hardy-Littlewood-P\'{o}lya submajorization) (K.M. Chongs's criterion, see  \cite[Theorem  4.2]{Chong}).
\end{enumerate}

Regarding the condition \eqref{ii}, we recall that a subset $K$ of $L_1(\Omega, \Sigma, \nu)$ is called uniformly integrable if for each $\varepsilon>0$ there is $\delta>0$ such that
$$\int_E |f|d\mu<\varepsilon$$
whenever $f\in K$ and $\mu(E)<\delta$.

The concept of uniform integrability can be easily generalized to any Banach lattice $X$ of measurable functions over a measure space $(\Omega, \Sigma, \nu)$. We shall say that a set $K \subset X$ has equi-absolutely continuous norms in $X$ if (see, e.g. \cite{As})
$$\lim_{\delta \to 0} \sup_{\nu(E)< \delta} \sup_{x \in K} ||x \chi_E||_X=0.$$

As for the condition \eqref{iii}, the study of weak compactness in Orlicz spaces ($L_1$ itself is an example of an Orlicz space) was of interest to W. Orlicz himself, who proved that each Orlicz space $L_G = L_G (0, 1)$ such that
$$\lim_{t \to \infty} \frac{G^*(2t)}{G^*(t)}=\infty,$$
where $G^*$ is the complementary (see
\cite[Chapter 1, formula (2.9)]{KR}) function to an Orlicz function $G$ (see Definition \ref{O}), satisfies Dunford-Pettis criterion of weak compactness \cite[assertion 1.5]{Orlicz} (see also \cite{Alex}), that is, every relatively weakly compact subset of
$L_G$ has equi-absolutely continuous norms in $L_G$.  
%We note that defintions of Orlicz functions and $N$-functions almost coincide (see Definitions \ref{N}, \ref{O}).

As noted in \cite[Ch.1]{RaoRen} \lq\lq The uniform integrability concept through its equivalence with a
condition discovered by De la  Vall\'{e}e Poussin in 1915 has given a powerful inducement for the study of Young's functions (or $N$-functions) and the
corresponding function spaces.\rq\rq

%There is a substantial literature devoted to the study of weak compactness in both Orlicz function and sequence spaces, see, for example \cite{Alex, Ando, As, Barcenas, Chong2, Chong, DSS, KM, Lefevre, Nowak, RaoRen, Zhang, Zhang2}, and references therein.

These characterisations of weak compactness have been shown time and time again to be powerful tools in functional analysis, and have served as sources of inspiration for much subsequent research  (see \cite{Alex, Ando, As, Barcenas, Chong2, Chong, DSS, KM, Lefevre, LMJ, Nowak, RaoRen, SSC, Zhang, Zhang2}).

Our main result of Section \ref{section finite} (see Theorem \ref{ChVP}) is of mostly pedagogical value: we prove directly the equivalence of K.M. Chong's (condition (iii)) and De la Vall\'{e}e Poussin's (condition (iv)) criteria of weak compactness of a subset $K$ of the space $L_1(0,1)$. In fact, this equivalence is obvious, as both criteria are known to be equivalent to relative weak compactness of $K$ in $L_1(\Omega, \Sigma, \nu)$, however, our proof does not refer to Dunford--Pettis criterion and provides a clear demonstration of powerful methods from the general theory of symmetric function spaces. We also prove that any function from $L_{1}(0, \infty)$ belongs to some Orlicz space, different from $L_1(0, \infty)$ itself (see Lemma \ref{N-function}). For the case of an integrable function from a finite measure space, this result is known (see \cite[Chapter II, p.60]{KR}). 
%In fact, the proof is similar to the case of finite measure space, however, we need to choose another partition of $(0, \infty)$, different from the partition of $(0,1)$ in \cite[Chapter II, p.60]{KR}. 

In Section \ref{section semifinite} we discuss the equivalence in the setting of a $\sigma$-finite measure space. Using Lemma \ref{N-function} it is straightforward to show that the Chong's condition implies the condition of  De la Vall\'{e}e Poussin in $L_1(0, \infty)$. We also show that the converse statement is also true under some additional condition (see Theorem \ref{expl of th}).

\section{Preliminaries}

%Recall that a subset $K$ of a space $L_1(\nu)$ is called uniformly integrable if, for any $\varepsilon>0$, there exists $\delta>0$ such that $\sup\left\{\int_{E}|f|d\nu: \ f \in K\right\}< \varepsilon$  whenever $\nu(E)<\delta.$ In particular, every bounded subset of $L_2$ is uniformly integrable. Alternatively, $K$ is bounded and uniformly integrable if and only if, for any $\varepsilon>0$, there is $N>0$ such that $\sup\left\{\int_{|f|>c}|f|d\nu: \ f \in K\right\}< \varepsilon$ whenever $c \geq N$ (see \cite[p.2]{Alex}).

Let $(I,m)$ denote the measure space, where $I = (0,\infty)$ (resp. $(0,1)$), equipped with the Lebesgue measure $m.$
 Let $L(I,m)$ be the space of all measurable real-valued functions on $I$ equipped with the Lebesgue measure $m$. Define $S(I,m)$ to be the subset of $L(I,m)$, which consists of all functions $f$ such that $m(\{|f| > s\}) < \infty$ for some $s > 0.$ Note that if $I=(0,1)$, then $S(I,m)=L(I,m).$

For $f\in S(I,m)$, we denote by $\mu(f)$ the decreasing rearrangement of the function $|f|.$ That is,
$$\mu(t,f)=\inf\{s\geq0:\ m(\{|f|>s\})\leq t\},\quad t \ge0.$$

We say that $f$ is submajorized by $g$ (both in $S(I,m)$) in the sense of Hardy--Littlewood--P\'{o}lya (written $f\prec\prec g$) if
$$\int_0^t\mu(s,f)ds\leq\int_0^t\mu(s,g)ds,\quad t\geq0.$$

%Also, we say that $f$ is majorized by $g$ on $I$ in the sense of Hardy--Littlewood--P\'{o}lya (written $f\prec g$) if in addition to $f\prec\prec g$, we have
%$$\int_{I}\mu(s,f)ds=\int_{I}\mu(s,g)ds.$$

For a positive function $g \in L_1(I,m)$ we define the following set
\begin{equation}\label{orbit}
\mathcal{C}_g:= \{f\in L_1(I,m): \quad |f| \prec \prec g \},
\end{equation}
which is called the orbit of a function $g$.

\subsection{Marcinkiewicz spaces}\label{Marc}

Let  $\psi:[0,\infty)\rightarrow[0,\infty)$ be an increasing concave function such that
$\psi(0+)=0.$
For any such function $\psi$ the Marcinkiewicz space $M_{\psi}(I)$ is defined by setting

$$M_{\psi}(I)=\{f \in S(I): \|f\|_{M_{\psi}(I)} < \infty \},$$
equipped with the norm
$$\|f\|_{M_{\psi}(I)}=\sup_{t\in I} \frac{1}{\psi(t)} \cdot \int_{0}^{t} \mu(s,f)ds.$$
For more details on Marcinkiewicz spaces of functions, we refer the reader to \cite[Chapter 2.5]{BSh} and \cite[Chapter II.5]{KPS}.

\subsection{Orlicz spaces}

\begin{definition}\label{N} 
An $N$-function is a continuous, convex function $G: [0, \infty) \rightarrow [0, \infty)$ satisfying the following properties (cf. \cite[Proposition 1.1]{Alex}):
\begin{enumerate}[{\rm (i)}]
\item $G(0)=0$,
\item $G(\lambda)>0$ for $\lambda>0$,
\item $\frac{G(\lambda)}{\lambda} \rightarrow 0$ as $\lambda \rightarrow 0+$,
\item $\frac{G(\lambda)}{\lambda} \rightarrow \infty$ as $\lambda \rightarrow \infty$.
\end{enumerate}
\end{definition}

For our purposes, the behavior of an $N$-function $G$ for small arguments (condition (iii)) is not crucial.

\begin{definition}\label{O} 
An Orlicz function is a function $G: [0, \infty) \to [0, \infty]$ with the following properties (cf. \cite[p.258]{KM}):
\begin{enumerate}[{\rm (i)}]
\item $G(0)=0$,
\item $G$ is not identically equal to zero,
\item $G$ is convex,
\item $G$ is continuous at zero.
\end{enumerate}
\end{definition}

It is worth noting that while every $N$-function is also an Orlicz function, the converse is not always true. Hereafter, unless stated otherwise, we denote by $G$ an $N$-function. For such a function, we consider the (extended) real-valued functional $\mathbf{G} (f)$, also referred to as the modular defined by an $N$-function $G$. This functional is defined on the class of all measurable functions $f$ on $I$ by
$$\mathbf{G} (f)=\int_{I} G(|f(t)|)dt.$$

The set
$$L_{G}=\{f \in S(I,m): \quad \|f\|_{L_G}<\infty\},$$
where
$$\|f\|_{L_G}=\inf\left\{c>0: \quad \int_{I} G\left(\frac{|f|}{c}\right)dm \leq 1\right\},$$
is said to be an Orlicz space defined by the Orlicz function $G$.

We denote by $G^*$ the function complementary (or conjugate) to $G$ in the sense of Young, defined as (cf. \cite[Chapter 1, p.11]{KR})

$$G^*(t)=\sup\{s|t| - G(s): \,\,\ s \geq 0\}.$$

It's noteworthy that $G^*$ is an Orlicz function (cf. \cite[p.258]{KM}).

\begin{definition}
A function
$$\varphi_{L_G}(x)=\|{\bf{1}}_{[0,x]}\|_{L_G}, \quad x \ge 0$$
is called the fundamental function of the Orlicz space $L_G$ ($\bf{1}$ stands for the characteristic function).
\end{definition}

\section{Equivalence of Chong's and De la Vall\'{e}e Poussin's criteria in $L_1(0,1)$}\label{section finite}
In this section, we discuss the
equivalence of K.M. Chong's and De la Vall\'{e}e Poussin's criteria of relative weak compactness of a subset $K\subset L_1(0,1)$ in terms of an Orlicz function $G$.
The following result presents an extension of \cite[Chapter II, p.60]{KR} to $\sigma$-finite measure spaces.
\begin{lemma}\label{N-function}
For any integrable function $f$ on $I=(0, \infty)$, there exists an $N$-function $G$ such that $G(|f|)$ is integrable on $I$. %Moreover, $\frac{G(t)}{t} \rightarrow \infty$ as $t \rightarrow \infty.$
\end{lemma}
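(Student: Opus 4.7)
The plan is to construct $G$ by prescribing its derivative $G'$ piecewise on $(0,\infty)$, then use the layer-cake identity
$$\int_I G(|f|)\,dm=\int_0^\infty G'(s)\,\phi(s)\,ds,\qquad \phi(s):=m\bigl(\{|f|>s\}\bigr),$$
so that controlling $\int_I G(|f|)$ reduces to controlling $\int_0^\infty G'\phi$. Since $f\in L_1(0,\infty)$ we have $\int_0^\infty\phi\,ds=\|f\|_1<\infty$, and by Chebyshev $\phi(s)\leq\|f\|_1/s<\infty$ for every $s>0$.

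The new feature compared with the case $(0,1)$ treated in \cite[Chapter II, p.60]{KR} is that the set $\{|f|\leq 1\}$ may have infinite measure, which forces $G(t)/t$ to vanish fast enough at $t=0$. I therefore split the construction at $t=1$. On $[0,1]$ I would simply set $G(t)=t^2$; then $G'(t)=2t\leq 2$, and
$$\int_0^1 G'(s)\phi(s)\,ds\leq 2\int_0^1\phi(s)\,ds\leq 2\|f\|_1<\infty.$$
On $[1,\infty)$ I would pick inductively a sequence $1=b_0<b_1<\cdots\uparrow\infty$ with $\int_{b_n}^\infty\phi\,ds\leq 2^{-n}$ (possible because $\int_1^\infty\phi<\infty$), define $G'$ to be the non-decreasing step function taking the value $n+2$ on $[b_n,b_{n+1})$, and extend by $G(t)=1+\int_1^t G'(s)\,ds$. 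The match $G'(1-)=2=G'(1+)$ makes $G$ convex across $t=1$, and
$$\int_1^\infty G'(s)\phi(s)\,ds=\sum_{n=0}^\infty(n+2)\int_{b_n}^{b_{n+1}}\phi(s)\,ds\leq\sum_{n=0}^\infty(n+2)\,2^{-n}<\infty.$$

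It then remains to verify Definition \ref{N}: $G(0)=0$, continuity and convexity are built in (the latter since $G'$ is non-decreasing throughout $[0,\infty)$); $G(\lambda)>0$ for $\lambda>0$ because $G'>0$ on $(0,\infty)$; $G(\lambda)/\lambda=\lambda\to 0$ as $\lambda\to 0$; and $G(\lambda)/\lambda\to\infty$ as $\lambda\to\infty$, because $G(\lambda)/\lambda$ is the running average of $G'$ on $[0,\lambda]$ and $G'(s)\to\infty$ by the choice of slopes, which simultaneously proves the \emph{moreover} clause. The only genuinely new ingredient, and the main obstacle compared to the finite-measure construction, is finding a piece on $[0,1]$ that both keeps $G(t)/t\to 0$ at the origin and makes $G(|f|)$ integrable on the possibly infinite-measure set $\{|f|\leq 1\}$; the quadratic choice $G(t)=t^2$ is the simplest device that achieves both, exploiting the pointwise inequality $t^2\leq t$ on $[0,1]$.
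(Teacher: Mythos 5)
Your construction is correct: the layer--cake identity $\int_I G(|f|)\,dm=\int_0^\infty G'(s)\,m(\{|f|>s\})\,ds$ is valid for your absolutely continuous $G$ with $G(0)=0$ (Tonelli), your $G'$ is nondecreasing on all of $[0,\infty)$ (so $G$ is convex and continuous), the quadratic piece near $0$ gives both $G(\lambda)/\lambda\to0$ and integrability over the possibly infinite-measure set $\{|f|\le1\}$, and the choice of breakpoints $b_n$ with $\int_{b_n}^\infty\phi\le 2^{-n}$ makes $\sum_n(n+2)2^{-n}$ dominate the tail. However, your route differs from the paper's in its mechanics. The paper partitions $\operatorname{supp} f$ into the dyadic level sets $I_n=\{2^n\le|f|<2^{n+1}\}$, $n\in\mathbb{Z}$, so that $\sum_n 2^n m(I_n)<\infty$, and then invokes its Appendix Lemma \ref{sequence} to produce weights $\alpha_n\uparrow\infty$ with $\sum_n\alpha_{n+1}2^n m(I_n)<\infty$; the derivative of $G$ is taken to be $t$ on $[0,1)$ (the same device as your $t^2$, and the same way of coping with the infinite measure of $\{|f|\le1\}$) and $\alpha_n$ on the dyadic blocks, and the integrability of $G(|f|)$ is then checked block by block rather than through the distribution function. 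In effect you dualize the paper's construction: the paper fixes the partition (dyadic values of $|f|$) and adapts the slopes via the sequence lemma, while you fix the slopes $n+2$ and adapt the partition points $b_n$ to the tail of $\phi$. What each buys: the paper isolates the combinatorial step in a reusable lemma (Lemma \ref{sequence}, proved in the Appendix), whereas your argument is self-contained, bypasses that lemma entirely, and makes the convergence of the relevant series ($\sum_n(n+2)2^{-n}$) immediate; both arguments hinge on exactly the same new ingredient relative to the finite-measure case in \cite[Chapter II, p.60]{KR}, namely forcing $G$ to be essentially quadratic near the origin.
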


\begin{proof}
Note that if $f=0$ on a set $I$, then $G(f)\equiv0$ on $I$. Hence $\int_{I}G(f(t))dt=0$, so $G(f)$ is integrable on $I$. We set
$$\supp f=\{t \in [0, \infty): f(t)\neq0\}.$$
Consider the family of pairwise disjoint sets
$$I_{n} = \{t\in \supp f: 2^{n} \leq |f(t)| <2^{n+1}\}, \,\,\,\ n\in \mathbb{Z}.$$ Then $(0, \infty)=I \supseteq \bigcup_{n =-\infty}^{\infty} I_n$, and $f$ is integrable on $I_n \ \text{for all} \  n\in \mathbb{Z}.$ Hence,
$$\sum_{n=-\infty}^{\infty} 2^{n}\cdot m(I_n) \leq \int_{0}^{\infty} |f(t)|dt <\infty.$$
By Lemma \ref{sequence} in the Appendix below there exists an increasing sequence of real numbers $\{\alpha_n \}_{n=-\infty}^{\infty}$ with $\alpha_n=0$ for all $n \leq 0$ such that
$\lim_{n\to\infty} \alpha_n=\infty$ and

\begin{equation}\label{alpha}
\sum_{n=-\infty}^{\infty} \alpha_{n+1} \cdot 2^{n} \cdot m(I_n) < \infty.
\end{equation}
We set

$$
p(t) =
\begin{cases}
t &\text{if \;\;  $ 0\leq t <1$},
\\
\alpha_n &\text{if \;\; $2^{n-1}\leq t < 2^{n}$}  \;\; (n=1,2,...).
\end{cases}
$$

Without loss of generality we may assume $\alpha_1 \geq 1$. Since $p(t)$ is nondecreasing and right-continuous, $p(0)=0,$ $p(t)>0$ whenever $t>0,$ and $\lim_{t\rightarrow \infty} p(t) = \infty$ we may define an $N-$function $G$ (see \cite[Definition 1.1, p.3]{Alex})  by

$$G(x) = \int_{0}^{x} p(t)dt, \ \ \ \ x \geq 0.$$
Since
$$G(2^n) =  \int_{0}^{2^n} p(t)dt \leq  \int_{0}^{2^n} \alpha_n dt = 2^n \cdot \alpha_n, \ \ \ \ n=1,2,... ,$$ it follows, in virtue of \eqref{alpha}, that

\begin{eqnarray*}\begin{split}
\int_{0}^{\infty} G(|f(t)|)dt & = \sum_{n=-\infty}^{\infty} \int_{I_n} G(|f(t)|)dt \\
&\leq \sum_{n=-\infty}^{\infty} G (2^{n+1}) m(I_n) \leq \sum_{n=-\infty}^{\infty}  2^{n+1} \cdot \alpha_{n+1} \cdot m(I_n)<\infty.\
\end{split}
\end{eqnarray*}
Hence, $G(|f|)$ is integrable on $(0, \infty)$.
The condition $\frac{G(t)}{t}=\frac{\int_{0}^{t}p(s)ds}{t} \rightarrow \infty$ as $t \rightarrow \infty$ follows immediately by applying the L'H\^{o}pital's rule.
\end{proof}

\begin{remark}
Observe, that if we had asked in Lemma \ref{N-function} for an Orlicz function $G$ (instead of an $N$-function), then there would be nothing to prove as one may choose
 $G(t)\equiv t$ for all $t \in [0, \infty).$
\end{remark}

Recall, in \cite[Lemma 4.1]{Chong} K.M. Chong proved that a weakly compact set in $L_1$ associated with a finite measure space is a subset of the orbit of some positive integrable function.

Another characterization of uniform integrability (or relative weak compactness) is given in a theorem of De la Vall\'{e}e Poussin \cite[Theorem 22, p.19-20]{Meyer}, which states the following:
{\it
A subset $K$ of $L_1(I,m)$ (with $m(I)<\infty$) is bounded and uniformly integrable if and only if there is an Orlicz function $G$ such that
$\frac{G(t)}{t} \to \infty$ as $t \to \infty$ so that $$\sup\left\{ \int_{I} G(|f|)dm: f\in K \right\}<\infty.$$}

\begin{remark}
In the theorem of De la Vall\'{e}e Poussin above, we may omit the boundedness condition as uniform integrability trivially implies boundedness.
\end{remark}

The following lemma may be found in \cite[Proposition 2.4]{HSZ} for a finite measure space (see also \cite[p. 22, Theorem D.2]{MOA} and  \cite{Weyl}), or in \cite[Proposition 2.3]{Hiai},  \cite[Proposition 1.2]{HN2} for an infinite measure space.

\begin{lemma}\label{Prop:lefunction2}
Assume that  $f=\mu(f) $ and $g=\mu(g)$ are integrable functions  on $(0,\infty)$.
If $\int_0^t  f(s)  ds \le  \int_0^t g(s)ds $ for every $0 <t< \infty $,
then for every increasing continuous convex function $\varphi$ on $(0,\infty)$, we have $\int_0^t \varphi(f(s)) ds \le  \int_0^t  \varphi(g(s ))ds $ for every $0 <t < \infty$.
\end{lemma}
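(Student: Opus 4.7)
The plan is to reduce the general convex $\varphi$ to a superposition of the two elementary cases $x\mapsto x$ (covered by the hypothesis) and $x\mapsto (x-\alpha)_{+}$ for $\alpha\ge 0$, and to verify the inequality for the latter family by a direct sup-formula argument. Since the inequality is preserved under addition of a constant to $\varphi$ (both sides shift by $t\,\varphi(0)$), I may assume $\varphi(0)=0$, extending $\varphi$ continuously to $[0,\infty)$. The right-derivative $\varphi'_{+}$ is then nonnegative, nondecreasing, right-continuous, and finite (convexity plus $\varphi(x)<\infty$ for $x>0$ forces $\varphi'_{+}(0)<\infty$), hence writable as $\varphi'_{+}(u)=c_{0}+\rho([0,u])$ for some $c_{0}\ge 0$ and positive Radon measure $\rho$ on $[0,\infty)$, giving the representation
\[
\varphi(x)=c_{0}\,x+\int_{[0,\infty)}(x-\alpha)_{+}\,d\rho(\alpha),\qquad x\ge 0.
\]

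The key step is the inequality, valid for each $\alpha\ge 0$,
\[
\int_{0}^{t}(f(s)-\alpha)_{+}\,ds\;\le\;\int_{0}^{t}(g(s)-\alpha)_{+}\,ds.
\]
Writing $F(t')=\int_{0}^{t'}f\,ds$, the map $t'\mapsto F(t')-\alpha t'$ has derivative $f(t')-\alpha$, which is nonnegative on $[0,\mu_{f}(\alpha)]$ and nonpositive beyond, where $\mu_{f}(\alpha)=m\{f>\alpha\}$. Consequently
\[
\int_{0}^{t}(f-\alpha)_{+}\,ds=F\bigl(\min(t,\mu_{f}(\alpha))\bigr)-\alpha\min(t,\mu_{f}(\alpha))=\sup_{0\le t'\le t}\bigl(F(t')-\alpha t'\bigr),
\]
and the analogous identity holds for $g$ and $G(t')=\int_{0}^{t'}g\,ds$. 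The hypothesis $F(t')\le G(t')$ for every $t'\in[0,t]$ then yields the desired inequality by taking suprema in $t'$.

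Assembling the pieces via Fubini's theorem (which is available because all integrands are nonnegative), the representation of $\varphi$ produces
\[
\int_{0}^{t}\varphi(f)\,ds=c_{0}\int_{0}^{t}f\,ds+\int_{[0,\infty)}\int_{0}^{t}(f-\alpha)_{+}\,ds\,d\rho(\alpha),
\]
and the same formula with $g$ in place of $f$; combining with the inequality for each $(\cdot-\alpha)_{+}$ and with the hypothesis itself for the linear term completes the proof. The only technicality is that $\rho$ may be infinite when $\varphi'_{+}$ is unbounded, but this causes no real difficulty, since nonnegativity of all integrands makes Fubini unconditionally applicable and the argument shows $\int_{0}^{t}\varphi(f)\,ds=\infty$ forces $\int_{0}^{t}\varphi(g)\,ds=\infty$ as well (so the inequality is never violated). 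The main conceptual content is thus the sup-formula for $(f-\alpha)_{+}$ together with the representation of convex functions as positive superpositions of truncations.
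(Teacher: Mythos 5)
Your argument is correct. Note, however, that the paper does not prove Lemma \ref{Prop:lefunction2} at all: it is quoted from the literature (\cite[Proposition 2.3]{Hiai}, \cite[Proposition 1.2]{HN2}, \cite[Proposition 2.4]{HSZ}, \cite[Theorem D.2]{MOA}), so there is no in-paper proof to compare with; what you have produced is a self-contained proof along the classical majorization-theory lines. Your two ingredients are sound: (i) the representation $\varphi(x)=c_0x+\int_{[0,\infty)}(x-\alpha)_+\,d\rho(\alpha)$ for an increasing convex $\varphi$ normalized by $\varphi(0+)=0$ (the normalization is legitimate since monotonicity plus convexity force $\varphi(0+)$ to be finite, and both sides of the desired inequality shift by $t\varphi(0+)$); and (ii) the Hardy-type identity $\int_0^t(f-\alpha)_+\,ds=\sup_{0\le t'\le t}\bigl(F(t')-\alpha t'\bigr)$, which you correctly justify from the monotonicity of $f=\mu(f)$: $F(\cdot)-\alpha\,\cdot$ is absolutely continuous with a.e.\ derivative $f-\alpha$, nonnegative before $d_f(\alpha)=m\{f>\alpha\}$ and nonpositive after, so the supremum is attained at $\min(t,d_f(\alpha))$, which is exactly where the truncated integral saturates. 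Since $\rho$ is locally finite (as $\varphi'_+$ is finite) and all integrands are nonnegative, Tonelli applies and the termwise inequalities, together with the hypothesis for the linear part, give the conclusion in $[0,\infty]$. Only cosmetic points deserve polish: whether the Stieltjes measure is charged on $[0,u]$ or $(0,u]$ (an atom at $0$ can be absorbed into $c_0$ or into $\rho$, but one should fix a convention), and the endpoint $s=d_f(\alpha)$ in the truncation identity, which is a null set and harmless. This is essentially the same mechanism that underlies the cited proofs (a positive superposition of the angle functions $x\mapsto(x-\alpha)_+$ reduces the statement to weak submajorization of truncations), so your proof can stand as a replacement for the external references.
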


The following theorem, the main result of this section,  provides the direct proof for the equivalence of K.M. Chong's and De la Vall\'{e}e Poussin's criteria of weak compactness of a subset $K$ of $L_1(0,1)$.

\begin{theorem}\label{ChVP}
Let $K$ be a bounded subset of $L_1(0,1)$, then the following two conditions are equivalent:
\begin{enumerate}[{\rm (i)}]
\item\label{a} there exists an Orlicz function $G$ with $\frac{G(t)}{t} \to \infty$ as $t \to \infty$ so that $$\sup \left\{ \int_{0}^1 G(|f|)ds: \ f\in K \right\}<\infty;$$
\item\label{b} there exists a positive function $g\in L_1 (0,1)$ such that $|f| \prec\prec g$ for all $f \in K,$ that is, $K$ is contained in the ordit of $g.$
\end{enumerate}
\end{theorem}

\begin{proof}  $ \eqref{a} \Longrightarrow \eqref{b}$.
Suppose \eqref{a} holds. Hence, $K$ is a bounded subset of $L_G=L_G(0,1).$ Without loss of generality, we may assume that $K$ is in the (closed) unit ball of  $L_G.$

Let $\varphi$ be a fundamental function of $L_G$. The function $\varphi$ is quasiconcave. Let $\psi$ be its least concave majorant, so $\frac{1}{2} \psi \leq \varphi \leq \psi$ (see e.g. \cite[p. 71, Proposition 5.10]{BSh}). The Marcinkiewicz space  $M_\psi$ contains the Orlicz space $L_G$ (see \cite[Theorem II. 5.13, p.72]{BSh}, see also \cite[Corollary II. 5.14, p.73]{BSh}). By Theorem II.5.7 from \cite{KPS} we know that
 $K$ lies in a unit ball of $M_\psi.$
Hence, by (2.12) in \cite[p.64]{KPS}, we have
$$\int_{0}^{t} \mu(s,f) ds  \leq \| f \|_{M_\psi} \cdot \int_{0}^{t} \psi'(s) ds \leq \| f \|_{M_\psi} \cdot \int_{0}^{t} \mu(s,\psi') ds \leq \int_{0}^{t} \mu(s,\psi') ds$$
for all $f \in K$ and $t \in (0,1),$ i.e. $|f| \prec \prec \psi'$ for all $f\in K.$ Thus, the assertion (b) holds with $g=\psi'$.

 $ \eqref{b} \Longrightarrow \eqref{a}$.
Suppose there is a positive function $g \in L_1(0,1)$ such that $|f| \prec \prec g$ for all $f \in K.$ Then by Lemma \ref{N-function} (see also \cite[Chapter II, p.60]{KR}) there exists an $N-$function $G$ (hence an Orlicz function) with $\frac{G(t)}{t} \to \infty$ as $t \to \infty$ such that $\int_{0}^{1} G(g(s))ds < \infty.$ In other words, $g \in L_G(0,1)$. We have $\int_{0}^{t} |f(s)|ds \leq \int_{0}^{t} \mu(s,f)ds$ for all $t \in (0,1]$ (see e.g. \cite[(2.12), p.64]{KPS}). By the assumption,
we have $\int_{0}^{t} \mu(s,f)ds \leq \int_{0}^{t} \mu(s,g)ds$  for all $f \in K$ and for all $t \in (0,1]$ and so, by Lemma \ref{Prop:lefunction2} and  \cite[Lemma 2.5 (iv)]{FK}, we have

$$\int_{0}^{t} G(|f(s)|)ds \leq
 \int_{0}^{t} G(\mu(s,f))ds \leq \int_{0}^{t} G(\mu(s,g))ds < \infty.
 %,\ \forall f \in K\ {\rm and}\\forall t \in (0,1].
$$
This completes the proof.
\end{proof}

\section{Equivalence of Chong's and De la Vall\'{e}e Poussin's criteria in $L_1(0,\infty)$}\label{section semifinite}

In this section, we prove the equivalence of Chong's and De la Vall\'{e}e Poussin's criteria in $L_1$ over a $(0,\infty)$ under natural additional condition.

\begin{remark}
Recall that the classical Dunford's criterion identifies bounded and uniformly integrable subsets of $L_1(I)$ (where $m(I)< \infty$) with relatively weakly compact sets (\cite[Theorem 15, p.76]{Diestel}, \cite[Theorem 23, p.20]{Meyer}). Note, however, that this criterion of weak compactness is no longer valid in $L_1(0, \infty)$ as the following example illustrates.

Let $M=\{f_n=\frac{1}{n} \chi_{[n.2n]}\}_{n=1}^{\infty}$. Clearly, $M$ is norm bounded in $L_1(0, \infty)$ and uniformly integrable.
%
%
%Indeed, let $\varepsilon>0$ be given, then we choose $\delta=\varepsilon$. Hence,
%$$\sup_{n \geq 1} \int_{E} \frac{1}{n}\chi_{[n, 2n]}=\sup_{n \geq 1} \frac{1}{n} \cdot m([n,2n]\cap E) \leq \sup_{n \geq 1}  m([n,2n]\cap E) < \varepsilon.$$
%
However, $M$ is not relatively weakly compact  in $L_1(0, \infty)$.
\end{remark}

\begin{remark}
Neither De la Vall\'{e}e Poussin's criterion (condition \eqref{a} in Theorem \ref{ChVP}), nor Chong's criterion (condition \eqref{b} in Theorem \ref{ChVP}) describe relatively  weakly compact subsets in $L_1(0, \infty)$.
%We illustrate this statement in the following example.

%\begin{example}
For example, let $K=\{f_n=\chi_{[n, n+1]}\}_{n=0}^{\infty}$. Obviously, $K$ is a bounded subset of $L_1(0, \infty)$, which is not relatively weakly compact in $L_1(0, \infty)$. However,  $|f_n| \prec \prec g$ for all $f_n \in K$, where $g(x)=\chi_{[0,1]}(x)+\frac{1}{x^\alpha} \chi_{(1, \infty)}(x)$, where $\alpha >1$.

Also,  taking $G(x)=x^\alpha$,  $\alpha >1$, we obtain an Orlicz function $G$ with $\frac{G(t)}{t} \to \infty$ as $t \to \infty$ such that
$$\sup \left\{ \int_{0}^1 G(|f|)ds: \ f\in K \right\}<\infty.$$

\end{remark}

The following theorem is an extension of Theorem \ref{ChVP} to a $\sigma$-finite measure space, provided some additional condition on a set $K$.
\begin{theorem}\label{expl of th}
Let $K$ be a bounded subset of $L_1(0,\infty)$ satisfying the following condition:
\begin{equation}\label{1}
\sup_{f\in K} \int_{N}^{\infty} \mu(s,f) ds \to 0, \quad N \to \infty.
\end{equation}
Then the following two conditions are equivalent:
\begin{enumerate}[{\rm (i)}]
\item there exists an Orlicz function $G$ with $\frac{G(t)}{t} \to \infty$ as $t \to \infty$ so that $$\sup \left\{ \int_{0}^{\infty} G(|f|)ds: \ f\in K \right\}<\infty;$$
\item there exists a positive function $g\in L_1 (0,\infty)$ such that $|f| \prec\prec g$ for all $f \in K.$
\end{enumerate}
\end{theorem}

\begin{proof}
A quick analysis of the proof of the implication $\eqref{b} \Longrightarrow \eqref{a}$ in Theorem \ref{ChVP} shows that it holds verbatim for bounded subsets $K$ in $L_1(0, \infty)$ (even without the condition \eqref{1}). 

Now, we show that the implication $\eqref{a} \Longrightarrow \eqref{b} $ holds under an additional assumption \eqref{1}. Define a concave function $\psi$ on $(0,\infty)$ analogously as in the proof of the Theorem  \ref{ChVP}. Hence, we have $\int_{0}^{t} \mu(s,f) ds  \leq \int_{0}^{t} \mu(s,\psi')ds$ for all $f \in K$ and $t\in(0,\infty)$.

Fix $\varepsilon>0$. Due to (\ref{1}) there exists a real number  $N>1$ such that
$$\sup_{f\in K} \int_{N}^{\infty} \mu(s,f) ds < \varepsilon.$$
We define
\[
  g(s) :=
  \begin{cases}
                                   \psi'(s)+\varepsilon & \text{if $0 \leq s \leq N$}, \\
  1/{s^\alpha} & \text{if $s>N$,}
  \end{cases}
\]
where $\alpha>1$. Clearly, $g \in L_1(0, \infty)$ is  a positive function and $|f| \prec \prec g$ for all $f \in K,$ which completes the proof.
\end{proof}

\section{Appendix}

The following lemma is, most probably, well known. However, since we could not find any suitable reference, we include its proof here for the sake of convenience.

\begin{lemma}\label{sequence}
Let $\{x_n\}_{n=1}^{\infty}$ be a sequence of real numbers such that the series $\sum_{n=1}^{\infty}|x_n|$ is convergent. Then there exists a sequence of real numbers $\{y_n\}_{n=1}^{\infty}$ such that $\lim_{n\to \infty} y_n=\infty$ and the series $\sum_{n=1}^{\infty}|x_n y_n|$ is convergent.
\end{lemma}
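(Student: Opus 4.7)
The plan is to exploit the fact that the tail sums of the series go to zero, and to define $y_n$ in terms of how fast this happens. I will use the tails $s_n := \sum_{k=n}^{\infty}|x_k|$, which form a decreasing sequence with $s_n \to 0$. My candidate will be $y_n := 1/\sqrt{s_n}$, which automatically satisfies $y_n \to \infty$; the remaining work is to show $\sum |x_n|/\sqrt{s_n} < \infty$.

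First I would dispose of a trivial degenerate case. If $s_N = 0$ for some $N$, then $x_n = 0$ for all $n \geq N$, and I can simply take any sequence $\{y_n\}$ tending to infinity (say $y_n = n$): the series $\sum|x_n y_n|$ reduces to a finite sum and there is nothing to prove. So I will assume $s_n > 0$ for every $n$, which makes $y_n$ well defined.

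The key step is the estimate
\[
\frac{|x_n|}{\sqrt{s_n}} \;=\; \frac{s_n - s_{n+1}}{\sqrt{s_n}} \;\leq\; 2\bigl(\sqrt{s_n} - \sqrt{s_{n+1}}\bigr),
\]
which follows at once from rationalising the right-hand side:
\[
2(\sqrt{s_n}-\sqrt{s_{n+1}}) = \frac{2(s_n-s_{n+1})}{\sqrt{s_n}+\sqrt{s_{n+1}}} \;\geq\; \frac{2(s_n-s_{n+1})}{2\sqrt{s_n}}.
\]
Summing this telescopes to
\[
\sum_{n=1}^{\infty} \frac{|x_n|}{\sqrt{s_n}} \;\leq\; 2\sqrt{s_1} \;<\; \infty,
\]
so $\sum|x_n y_n|$ converges as required.

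I do not foresee a serious obstacle. The only subtle point is choosing the right growth rate for $y_n$: taking $y_n = 1/s_n$ would be too aggressive and the resulting series need not converge, while $1/\sqrt{s_n}$ is the classical sweet spot that both diverges to infinity and keeps the weighted series summable via the telescoping trick above. As an alternative construction, if one prefers to avoid the square-root estimate, one could instead pick indices $n_0 < n_1 < \cdots$ with $s_{n_j} \leq 2^{-j}$ and set $y_n = j$ for $n_j \leq n < n_{j+1}$, which gives $\sum|x_n y_n| \leq \sum_j j\cdot 2^{-j} < \infty$; but the telescoping argument above is cleaner and I would present it as the proof.
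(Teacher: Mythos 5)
Your proof is correct, but it takes a genuinely different (and more quantitative) route than the paper. You work with the tails $s_n=\sum_{k\ge n}|x_k|$, set $y_n=1/\sqrt{s_n}$, and use the classical telescoping estimate $\frac{s_n-s_{n+1}}{\sqrt{s_n}}\le 2\bigl(\sqrt{s_n}-\sqrt{s_{n+1}}\bigr)$ (an Abel--Dini type argument), which yields the explicit bound $\sum_n |x_n y_n|\le 2\sqrt{s_1}$; your treatment of the degenerate case $s_N=0$ is the right precaution and makes the argument complete. The paper instead argues in blocks: it extracts indices $n_1<n_2<\cdots$ with $\sum_{k=n_l}^{n}|x_k|<2^{-(l-1)}$ for $n>n_l$, defines $y_n$ to be the constant $l-1$ on each block $(n_{l-1},n_l]$, and verifies the Cauchy criterion using $\sum_l l\,2^{-l}<\infty$ --- which is essentially the alternative construction you sketch in your last sentence. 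What each buys: your version is shorter, gives an explicit closed-form choice of $y_n$ and an explicit bound on the weighted sum, and the resulting $y_n$ is still nondecreasing (since $s_n$ is nonincreasing), so it serves equally well in the application to Lemma~\ref{N-function}; the paper's block construction produces integer-valued, piecewise-constant weights, which is slightly more hands-on but requires the Cauchy bookkeeping you avoid. No gaps in your argument.
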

\begin{proof}
Let us construct a (strictly) increasing sequence of natural numbers $\{n_l\}_{l=1}^{\infty}$ as follows. By the Cauchy's theorem we can find $n_1\in \mathbb{N}$ such that
$$\sum_{k=n_1}^{n}|x_k|<1, \ \ \ \text{for any} \ \ n>n_1.$$
Similarly, we can find $n_2>n_1$ such that
$$\sum_{k=n_2}^{n}|x_k|<\frac{1}{2}, \ \ \ \text{for any} \ \ n>n_2.$$
Continuing this procedure we construct the sequence $\{n_l\}_{l=1}^{\infty}$ such that $n_{l+1}>n_l$ for all $l \in \mathbb{N},$ and

\begin{equation}\label{sum}
\sum_{k=n_l}^{n}|x_k|<\frac{1}{2^{l-1}}, \ \ \ \text{for any} \ \ n>n_l.
\end{equation}
%and for any $l \geq 1.$

Now we construct a nondecreasing sequence $\{y_n\}_{n=1}^{\infty}$ such that $\lim_{n\to \infty} y_n=\infty$. Put $y_n=1$ for any $1\leq n \leq n_1$ and

$$y_n=l-1 \ \ \ \text{for any} \ \ \ n_{l-1}< n \leq n_l, \ \ l \geq 2.$$

It is easy to see that $\{y_n\}_{n=1}^{\infty}$ is nondecreasing. Moreover, 
$$\lim_{n\to \infty} y_n=\sup_{n\in \mathbb{N}}y_n \geq \sup_{l \geq 1} y_{n_l}=\infty.$$

Finally, we show that the series $\sum_{n=1}^{\infty}|x_n y_n|$ is convergent by using the Cauchy's theorem.

Let $\varepsilon>0.$ Since the series $\sum_{k=1}^{\infty}\frac{k}{2^k}$ is convergent we can choose $l_0=l_0(\varepsilon) \in \mathbb{N}$ such that $\sum_{k=l_0}^{\infty}\frac{k}{2^k} < \varepsilon.$

Let $n \in \mathbb{N}$ be such that $n>n_{l_0}.$ Let $m>n,$ consider the sum

$$\sum_{k=n}^{m}|x_k y_k|=\sum_{k=n}^{m}|x_k| y_k.$$

Define $s>l_0$ by condition $n_{s-1}<m\leq n_s.$ We have

$$\sum_{k=n}^{m}|x_k| y_k \leq \sum_{k=n_{l_0}+1}^{n_s}|x_k| y_k=\sum_{i=l_0}^{s-1}\sum_{k={n_i+1}}^{n_{i+1}}|x_k| y_k.$$

Since $y_k=i$ for any $n_i<k \leq n_{i+1},$ we obtain

$$\sum_{i=l_0}^{s-1}\sum_{k={n_i+1}}^{n_{i+1}}|x_k| y_k=\sum_{i=l_0}^{s-1} i\sum_{k={n_i+1}}^{n_{i+1}}|x_k|.$$

By the definition of the sequence $\{n_i\}$ and inequality (\ref{sum}), we have

$$\sum_{i=l_0}^{s-1} i\sum_{k={n_i+1}}^{n_{i+1}}|x_k| \leq \sum_{i=l_0}^{s-1} \frac{i}{2^{i-1}} \leq \sum_{i=l_0}^{\infty} \frac{i}{2^{i-1}} \leq 2\varepsilon.$$

Therefore, for any $\varepsilon>0$ there exists $n_0=n_0(\varepsilon)=n_{l_0}$ such that for any $n>n_0$ and any $m>n$

$$\sum_{k=n}^{m}|x_k y_k| \leq 2\varepsilon,$$
which completes the proof.
\end{proof}

\section{Acknowledgment}
This work is dedicated to celebrating the 60th anniversary of Fedor Sukochev. In fact,
the present paper was written in collaboration with Fedor Sukochev several years ago. Many of our research papers stand as a testament to his invaluable mentorship and support, enabling numerous research endeavors. The authors also would like to thank A.A. Sedaev, Y.M. Semenov, J. Huang and T. Scheckter for helpful discussions, and thank A. Mukanov for his assistance in proving Lemma \ref{sequence}.

The authors were supported by the Science Committee of the Ministry of Science and Higher Education of the Republic of Kazakhstan (Grant No. AP14869301).

\end{document}